\documentclass[a4paper,11pt]{article}
\usepackage{amsmath,amsthm,amssymb,enumitem,xcolor}
\usepackage{tikz}
\usetikzlibrary{shapes,arrows,decorations.markings,positioning}

\usepackage[nosort,nocompress]{cite}

\usepackage[bookmarks=false,hyperfootnotes=false,colorlinks,
    linkcolor={red!60!black},
    citecolor={blue!50!black},
    urlcolor={blue!80!black}]{hyperref}

\renewcommand{\eqref}[1]{\hyperref[#1]{(\ref{#1})}}

\newcommand{\StrikeThruDistance}{0.6em}

\tikzstyle{property}=[draw, ellipse,thick]
\tikzstyle{rightarrow}=[-implies,double distance=0.25em,shorten <=0.2em,shorten >=0.1em,thick]
\tikzstyle{rightarrow strike thru}=[%
    rightarrow,
    decoration={%
        markings, mark=at position 0.5 with {
            \draw[ultra thick,solid,-] ++(-\StrikeThruDistance,-\StrikeThruDistance) -- (\StrikeThruDistance,\StrikeThruDistance);
        }
    },
    postaction={decorate}
]
\tikzstyle{rightarrow question}=[%
    rightarrow,
    densely dotted,
    decoration={
        markings, mark=at position 0.5 with {
            \draw node {\LARGE \textbf{?}};
        }
    },
    postaction={decorate}
]

\pagestyle{plain}

\setlength{\evensidemargin}{0pt}
\setlength{\oddsidemargin}{0pt}
\setlength{\topmargin}{-20pt}
\setlength{\footskip}{55pt}
\setlength{\textheight}{670pt}
\setlength{\textwidth}{450pt}
\setlength{\headsep}{10pt}
\setlength{\parindent}{0pt}
\setlength{\parskip}{1ex plus 0.5ex minus 0.2ex}

\newlist{enumlist}{enumerate}{1}
\setlist[enumlist]{labelindent=0cm,label=\arabic*.,ref=\arabic*,labelwidth=2.5ex,labelsep=0.5ex,leftmargin=3ex,align=left,topsep=0.5ex,itemsep=1ex,parsep=1ex}

\newlist{itemlist}{itemize}{1}
\setlist[itemlist]{labelindent=0cm,label=$\bullet$,labelwidth=2.5ex,labelsep=0.5ex,leftmargin=3ex,align=left,topsep=0.5ex,itemsep=1ex,parsep=1ex}

\newlist{lijst}{itemize}{1}
\setlist[lijst]{labelindent=0cm,label={},labelwidth=4ex,labelsep=1ex,leftmargin=5ex,align=left,topsep=0.5ex,itemsep=1ex,parsep=1ex}

\numberwithin{equation}{section}

{\theoremstyle{definition}\newtheorem{definition}{Definition}[section]

}

\newtheorem{proposition}[definition]{Proposition}
\newtheorem{lemma}[definition]{Lemma}
\newtheorem{theorem}[definition]{Theorem}

\newcommand{\C}{\mathbb{C}}

\newcommand{\al}{\alpha}
\newcommand{\be}{\beta}

\newcommand{\recht}{\rightarrow}
\newcommand{\Z}{\mathbb{Z}}

\newcommand{\om}{\omega}
\newcommand{\N}{\mathbb{N}}

\newcommand{\ovt}{\mathbin{\overline{\otimes}}}

\newcommand{\R}{\mathbb{R}}
\newcommand{\F}{\mathbb{F}}

\newcommand{\cZ}{\mathcal{Z}}
\newcommand{\Ad}{\operatorname{Ad}}

\newcommand{\cK}{\mathcal{K}}

\newcommand{\T}{\mathbb{T}}
\newcommand{\actson}{\curvearrowright}

\newcommand{\cU}{\mathcal{U}}

\newcommand{\cN}{\mathcal{N}}
\newcommand{\cR}{\mathcal{R}}

\newcommand{\dis}{\displaystyle}

\newcommand{\SL}{\operatorname{SL}}
\newcommand{\bI}{\mathbb{I}}

\begin{document}

\begin{center}
{\boldmath\LARGE\bf Inner amenability, property Gamma,\vspace{0.5ex}\\ McDuff II$_1$ factors and stable equivalence relations}

\bigskip

{\sc by Tobe Deprez\footnote{KU~Leuven, Department of Mathematics, Leuven (Belgium), tobe.deprez@kuleuven.be \\ Supported by a PhD fellowship of the Research Foundation Flanders (FWO).} and Stefaan Vaes\footnote{KU~Leuven, Department of Mathematics, Leuven (Belgium), stefaan.vaes@kuleuven.be \\
    Supported in part by European Research Council Consolidator Grant 614195, and by long term structural funding~-- Methusalem grant of the Flemish Government.}}

\end{center}

\begin{abstract}\noindent
We say that a countable group $G$ is McDuff if it admits a free ergodic probability measure preserving action such that the crossed product is a McDuff II$_1$ factor. Similarly, $G$ is said to be stable if it admits such an action with the orbit equivalence relation being stable. The McDuff property, stability, inner amenability and property Gamma are subtly related and several implications and non implications were obtained in \cite{Ef73,JS85,Va09,Ki12a,Ki12b}. We complete the picture with the remaining implications and counterexamples.
\end{abstract}

\section{Introduction}

Murray and von Neumann proved in \cite{MvN43} that the hyperfinite II$_1$ factor $R$ is not isomorphic to the free group factor $L(\F_2)$, by showing that $R$ has \emph{property Gamma} while $L(\F_2)$ has not. Recall that a tracial von Neumann algebra $(M,\tau)$ has property Gamma if there exists a net of unitaries $u_n \in M$ such that $u_n \recht 0$ weakly and $\lim_n \|x u_n - u_n x \|_2 = 0$ for all $x \in M$, where $\|x\|_2 = \sqrt{\tau(x^* x)}$.

We say that a countable group $G$ has \emph{property Gamma} when $L(G)$ does. In \cite{Ef73}, it was shown that if $G$ has property Gamma, then $G$ is \emph{inner amenable~:} there exists a conjugation invariant mean $m$ on $G$ such that $m(\cU) = 0$ for every finite subset $\cU \subset G$. The converse does not hold~: an inner amenable group $G$ with infinite conjugacy classes (icc) but not having property Gamma was constructed in \cite{Va09}.

If a II$_1$ factor $M$ admits non commuting central sequences, more precisely if the central sequence algebra $M' \cap M^\om$ is non abelian, then $M \cong M \ovt R$ and $M$ is said to be \emph{McDuff}, see \cite{McD69}. The counterpart of the McDuff property for equivalence relations was introduced in \cite{JS85} and is called \emph{stability}. A countable ergodic probability measure preserving (pmp) equivalence relation $\cR$ is called \emph{stable} if $\cR \cong \cR \times \cR_0$, where $\cR_0$ is the unique hyperfinite ergodic pmp equivalence relation. In \cite[Theorem 3.4]{JS85}, stability is characterized in terms of central sequences. Clearly, if $\cR$ is stable, the II$_1$ factor $L(\cR)$ is McDuff.

Again, both the McDuff property and stability are closely related to inner amenability. If $G$ is a countable group and $G \actson (X,\mu)$ is a free ergodic pmp action $(X,\mu)$, we consider the group measure space II$_1$ factor $M = L^\infty(X) \rtimes G$ and the orbit equivalence relation $\cR = \cR(G \actson X)$. By \cite{JS85}, if $\cR$ is stable, the group $G$ is inner amenable. Actually, already if $M$ is McDuff, $G$ must be inner amenable (see Proposition \ref{prop.McDuff-inner} below). We say that a group $G$ is \emph{stable}, resp.\ \emph{McDuff}, if $G$ admits a free ergodic pmp action $G \actson (X,\mu)$ such that the orbit equivalence relation $\cR(G \actson X)$ is stable, resp.\ the crossed product $L^\infty(X) \rtimes G$ is McDuff. Here and in what follows all probability spaces are assumed to be standard.

We thus have the following subtly related notions for a countable group $G$~: inner amenability, property Gamma, the McDuff property and stability. Figure \ref{fig:summary} summarizes all implications and non implications between these properties. The main contribution of this article is to fill in all the arrows that were unknown so far. So, we prove that Kida's counterexamples of \cite{Ki12a} provide icc groups with property Gamma that are not McDuff. We also adapt the example of \cite{PV08} of a (necessarily non icc) property~(T), but yet McDuff group $G$ to obtain an icc McDuff group that is not stable.

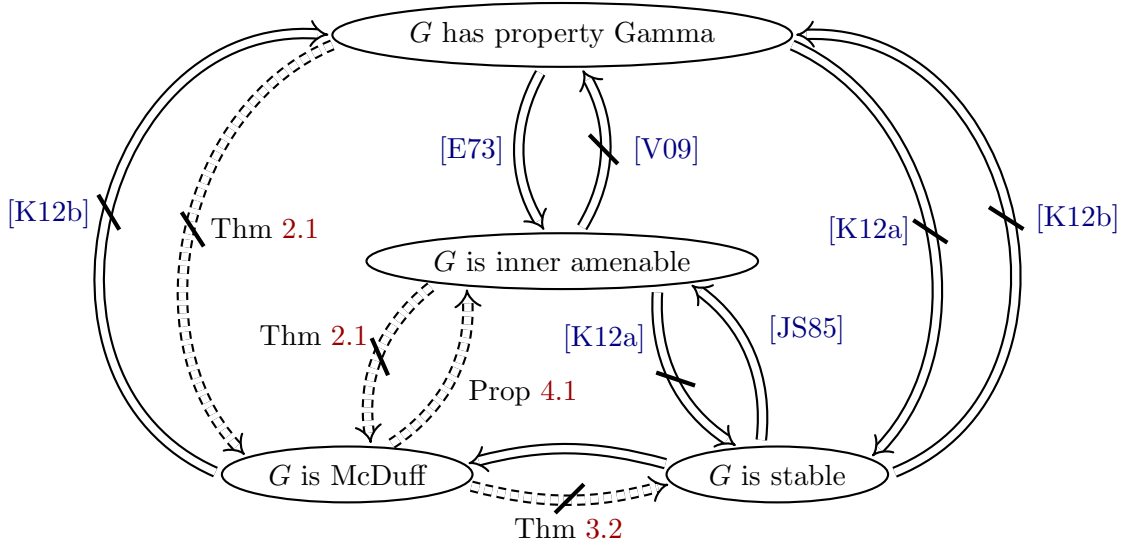
\begin{figure}[h]
        \centering
        \begin{tikzpicture}

        \draw (0,0) node[property] (inner) {$G$ is inner amenable};
        \draw (inner) ++(90:3) node[property] (Gamma) {$G$ has property Gamma};
        \draw (inner) ++(-45:4) node[property] (stable) {$G$ is stable};
        \draw (inner) ++(-135:4) node[property,align=center] (McDuff) {$G$ is McDuff};

        \draw[rightarrow] (Gamma) to[bend right] node[anchor=east,outer sep=0.2em]{\cite{Ef73}} (inner);
        \draw[rightarrow strike thru] (inner) to[bend right] node[anchor=west, outer sep=0.3em]{\cite{Va09}}  (Gamma);

        \draw[rightarrow,densely dashed] (McDuff.35) to[bend right] node[below right = -0.3em and 0.25em] {Prop \ref{prop.McDuff-inner}} (inner.-165);
        \draw[rightarrow strike thru,densely dashed] (inner.-170) to[bend right] node[anchor=south east] {Thm \ref{thm.Gamma-not-McDuff}} (McDuff.50);

        \draw[rightarrow] (stable.120) to[bend right] node[anchor=south west, outer sep=0em]{\cite{JS85}} (inner.-10);
        \draw[rightarrow strike thru] (inner.-15) to[bend right] node[above left=0.5em and 0.6em]{\cite{Ki12a}} (stable.145);

        \draw[rightarrow strike thru,densely dashed] (McDuff.-5) to[bend right=15] node[anchor=north, outer sep=0.3em] {Thm \ref{thm.McDuff-not-stable}} (stable.185);
        \draw[rightarrow] (stable.175) to[bend right=15] (McDuff.5);

        \draw[rightarrow strike thru,densely dashed] (Gamma.182) to[bend right=55] node[anchor=west,outer sep=0.3em] {Thm \ref{thm.Gamma-not-McDuff}} (McDuff.170);
        \draw[rightarrow strike thru] (Gamma.-2) to[bend left=55] node[anchor=east,outer sep=0.3em] {\cite{Ki12a}} (stable.10);

        \draw[rightarrow strike thru] (McDuff.180) to[bend left=80,looseness=1.3] node[anchor=east,outer sep=0.3em] {\cite{Ki12b}} (Gamma.180);
        \draw[rightarrow strike thru] (stable.0) to[bend right=80,looseness=1.3] node[anchor=west,outer sep=0.3em] {\cite{Ki12b}} (Gamma.0);
        \end{tikzpicture}
        \caption{Implications and non implications for a countable discrete icc group $G$. The dashed arrows are proven in this article.}
        \label{fig:summary}
    \end{figure}

For later use, recall from \cite[Theorem 1.2]{Jo99} that a subgroup $H$ of a countable group $G$ has \emph{relative property (T)} if and only if for every unitary representation $\pi : G \recht \cU(\cK)$ and every bounded sequence of vectors $\xi_n \in \cK$ satisfying $\lim_n \|\pi(g)\xi_n - \xi_n\| = 0$ for all $g \in G$, we have that $\lim_n \|P_H(\xi_n) - \xi_n\| = 0$, where $P_H : \cK \recht \cK^H$ denotes the orthogonal projection of $\cK$ onto the closed subspace of $\pi(H)$-invariant vectors.

In the context of tracial von Neumann algebras, this characterization of relative property~(T) gives the following well known lemma. For completeness, we include a proof.

\begin{lemma}\label{lem.char-prop-T}
Let $(M,\tau)$ be a tracial von Neumann algebra with von Neumann subalgebra $N \subset M$. Let $G$ be a countable group and $H < G$ a subgroup with the relative property~(T). Let $\pi : G \recht \cU(M)$ be a homomorphism satisfying $\pi(g) N \pi(g)^* = N$ for all $g \in G$. Define $B = \pi(H)' \cap N = \{ x \in N \mid \pi(h) x = x \pi(h) \; \forall h \in H\}$. Denote by $E_B : N \recht B$ the unique trace preserving conditional expectation.

If $(a_n)$ is a bounded sequence in $N$ satisfying $\lim_n \|\pi(g) a_n - a_n \pi(g)\|_2 = 0$ for all $g \in G$, then $\lim_n \|a_n - E_B(a_n)\|_2 = 0$.
\end{lemma}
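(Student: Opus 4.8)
The plan is to feed the conjugation action of $G$ on $N$ into the characterization of relative property~(T) recalled above. Since $\pi(g)N\pi(g)^* = N$ for every $g$, the formula $\alpha_g(x) = \pi(g)x\pi(g)^*$ defines a trace preserving action of $G$ on $N$. Because conjugation by a unitary preserves $\tau$, each $\alpha_g$ extends to a unitary $\rho(g)$ of the Hilbert space $\cH = L^2(N,\tau)$, and one checks directly that $\rho : G \recht \cU(\cH)$ is a unitary representation. As $(a_n)$ is bounded in $N$, the vectors $\widehat{a_n}\in\cH$ are bounded in $\|\cdot\|_2$. First I would translate the hypothesis into asymptotic invariance: using that left multiplication by the unitary $\pi(g)$ is $\|\cdot\|_2$-isometric,
\[
\|\pi(g)a_n - a_n\pi(g)\|_2 = \|a_n - \pi(g)^* a_n \pi(g)\|_2 = \|\widehat{a_n} - \rho(g^{-1})\widehat{a_n}\|_2 ,
\]
and since the hypothesis runs over all $g\in G$ this gives $\lim_n\|\rho(g)\widehat{a_n} - \widehat{a_n}\|_2 = 0$ for all $g\in G$. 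Thus $(\widehat{a_n})$ is a bounded sequence of almost $\rho(G)$-invariant vectors.

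The crux is to identify the two objects occurring in the relative property~(T) criterion for the representation $\rho$, namely the closed subspace $\cH^H$ of $\rho(H)$-invariant vectors and the projection $P_H$ onto it. I claim that $\cH^H = L^2(B,\tau)$ and that $P_H$ is the $L^2$-extension $e_B$ of $E_B$. One inclusion is immediate: if $x\in B$ then $\alpha_h(x)=x$ for all $h\in H$, so $\widehat{x}$ is $\rho(H)$-invariant, and passing to $\|\cdot\|_2$-closures yields $L^2(B)\subseteq\cH^H$.

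The reverse inclusion $\cH^H\subseteq L^2(B)$ is the step I expect to be the main obstacle, since $H$ need not be amenable and so one cannot simply average over $H$. I would handle it by a mean ergodic argument. Fix $\eta\in\cH^H$ and $x\in N$. As $\rho(h)$ is unitary and fixes $\eta$, we have $\langle\eta,\rho(h)\widehat{x}\rangle = \langle\eta,\widehat{x}\rangle$ for all $h$, hence $\langle\eta,\zeta\rangle = \langle\eta,\widehat{x}\rangle$ for every $\zeta$ in the $\|\cdot\|_2$-closed convex hull $C_x$ of the orbit $\{\rho(h)\widehat{x}\mid h\in H\}$. The unique element $\zeta_0\in C_x$ of minimal norm is $\rho(H)$-invariant (by uniqueness, since each $\rho(h)$ preserves $C_x$ isometrically) and has operator norm at most $\|x\|_\infty$, so $\zeta_0\in N^H = B$; testing against $\widehat{b}$ for $b\in B$ identifies $\zeta_0 = \widehat{E_B(x)}$. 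Therefore $\langle\eta,\widehat{x}-\widehat{E_B(x)}\rangle = 0$ for all $x\in N$, and since the vectors $\widehat{x}-\widehat{E_B(x)}$ span a dense subspace of $L^2(B)^\perp$, we get $\eta\in L^2(B)$. This establishes $\cH^H = L^2(B)$ and $P_H = e_B$.

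Finally I would invoke relative property~(T) of $H<G$ for the representation $\rho$ and the bounded almost invariant sequence $(\widehat{a_n})$: the criterion yields $\lim_n\|P_H(\widehat{a_n}) - \widehat{a_n}\| = 0$, which by the identification of the previous paragraph is exactly $\lim_n\|E_B(a_n) - a_n\|_2 = 0$, as desired. The only genuinely nontrivial point is the identification of the invariant vectors with $L^2(B)$; everything else is a direct unwinding of definitions.
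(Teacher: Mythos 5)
Your proof is correct and follows exactly the route the paper takes: it realizes $(\Ad\pi(g))_{g\in G}$ as a unitary representation on $L^2(N)$ and identifies $E_B$ with the orthogonal projection onto the $\pi(H)$-invariant vectors, then applies the relative property~(T) criterion. The paper states these two observations in one sentence without proof; your convex-hull argument for the inclusion $\cH^H\subseteq L^2(B)$ is the standard way to justify the second observation and is carried out correctly.
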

\begin{proof}
It suffices to observe that $(\Ad \pi(g))_{g \in G}$ defines a unitary representation of $G$ on $L^2(N)$ and that $E_B$ is the orthogonal projection of $L^2(N)$ onto the subspace of $H$-invariant vectors.
\end{proof}

\section{An icc group with property Gamma that is not McDuff}

In \cite{Ki12a}, Kida constructed the first icc groups $G$ that have property Gamma (and in particular, are inner amenable) but that are not stable, meaning that they do not admit a stable free ergodic pmp action. We prove here that these groups are not even McDuff, meaning that they do not admit a free ergodic pmp action with the crossed product being McDuff.

Fix a property~(T) group $H$ that admits a central element $a \in \cZ(H)$ of infinite order. As in \cite{dC05}, one could take $n \geq 3$, define $H \subset \SL(n+2,\Z)$ given by
$$H = \left\{ \; \begin{pmatrix} 1 & b & c \\ 0 & A & d \\ 0 & 0 & 1 \end{pmatrix} \; \middle| \; \parbox[c]{4.5cm}{$\dis A \in \SL(n,\Z) \; , \; b \in \Z^{1 \times n} \; , \vspace{0.9ex}\\ c \in \Z \; , \; d \in \Z^{n \times 1}$} \right\} \quad\text{and take}\quad a = \begin{pmatrix} 1 & 0 & 1 \\ 0 & \bI_n & 0 \\ 0 & 0 & 1 \end{pmatrix} \; .$$

Given two non zero integers $p,q$ with $|p| \neq |q|$, we define $G$ as the HNN extension
\begin{equation}\label{eq.G-Gamma-not-McDuff}
G = \langle H, t \mid t^{-1} a^p t = a^q \rangle \; .
\end{equation}

\begin{theorem}\label{thm.Gamma-not-McDuff}
Kida's group $G$ defined in \eqref{eq.G-Gamma-not-McDuff} is icc, has property Gamma, but is not McDuff.
\end{theorem}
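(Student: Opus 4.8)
The plan is to take the icc property and property~Gamma from Kida's analysis in \cite{Ki12a} and to devote the real work to the new statement, namely that $G$ is not McDuff. For the icc property I would use the action of $G$ on the Bass--Serre tree $T$ of the HNN extension \eqref{eq.G-Gamma-not-McDuff}, whose vertex stabilizers are the conjugates of $H$ and whose edge stabilizers are conjugates of $\langle a^p\rangle\cong\Z$. A nontrivial $g\in G$ is either hyperbolic, in which case its centralizer preserves the axis and is virtually cyclic so that the conjugacy class is infinite, or conjugate into $H$. In the latter case one reduces to $g\in H$: since $Z(H)=\langle a\rangle$ and $H/Z(H)$ is icc (for $n\ge 3$ the group $\SL(n,\Z)$ is icc with infinite orbits on $\Z^n$), every $g\in H\setminus\langle a\rangle$ already has infinite $H$-conjugacy class, while for $g=a^k\neq e$ the relation $t^{-1}a^pt=a^q$ with $|p|\neq|q|$ gives $t\notin C_G(a^k)$, hence $C_G(a^k)=H$ and the class is infinite. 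Property Gamma of $L(G)$ is the (genuinely delicate) content of \cite{Ki12a}, so I would simply cite it.

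For the not-McDuff statement, let $G\actson(X,\mu)$ be an arbitrary free ergodic pmp action, write $A=L^\infty(X)$ and $M=A\rtimes G$, and let $(x_n)$ be a bounded central sequence representing an element of $M'\cap M^\om$. The first step is to localize: since $H$ has property~(T), the pair $H<G$ has the relative property~(T), so I would apply Lemma~\ref{lem.char-prop-T} with $N=M$ and with $\pi(g)=u_g$ the canonical unitaries, which indeed satisfy $u_gMu_g^*=M$. Because $\|[u_g,x_n]\|_2\to 0$ for every $g\in G$, the lemma yields $\|x_n-E_B(x_n)\|_2\to 0$ with $B=L(H)'\cap M$. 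Consequently $M'\cap M^\om\subseteq B^\om$, and the whole problem reduces to proving that $B$ is abelian.

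To compute $B$ I would expand $m=\sum_{g\in G}m_g u_g\in B$ in its Fourier series ($m_g\in A$). The condition $u_hmu_h^*=m$ reads $\sigma_h(m_{h^{-1}gh})=m_g$, so the norms $\|m_g\|_2$ are constant along each $H$-conjugacy class; square-summability then forces $m_g=0$ as soon as $g$ has infinite $H$-conjugacy class. Hence $\mathrm{supp}(m)\subseteq\mathrm{FC}_H(G):=\{g:|\{hgh^{-1}:h\in H\}|<\infty\}$, and the crux is the claim $\mathrm{FC}_H(G)=\langle a\rangle$: elements of $H\setminus\langle a\rangle$ are excluded because $H$ is icc modulo $Z(H)=\langle a\rangle$, and elements of $G\setminus H$ are excluded because on $T$ the $H$-centralizer of a hyperbolic element, or of an element fixing another vertex, is contained in an edge stabilizer (a conjugate of the cyclic group $\langle a^p\rangle$) and so has infinite index in $H$. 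Granting this, and using that $a$ is central in $H$ (so $u_a$ commutes with $A^H$), one obtains $B=\{\sum_k m_k u_{a^k}:m_k\in A^H\}=A^H\ovt L(\langle a\rangle)$, a tensor product of two abelian algebras. Thus $B$ is abelian, $M'\cap M^\om\subseteq B^\om$ is abelian, $M$ is not McDuff, and since the action was arbitrary $G$ is not McDuff.

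The main obstacle is the identification $\mathrm{FC}_H(G)=\langle a\rangle$: the inclusion $\supseteq$ is immediate, but $\subseteq$ rests on the two structural inputs above, namely that $H$ is icc modulo $\langle a\rangle$ and that the Bass--Serre geometry makes the $H$-centralizer of every element of $G\setminus H$ of infinite index; this is precisely where the explicit shape of $H$ inside $\SL(n,\Z)$ and the HNN structure are used, the remainder being the formal combination of Lemma~\ref{lem.char-prop-T} with the square-summability argument. I would be careful to check that the localization is valid for actions that are not ergodic when restricted to $H$, since then $A^H\neq\C$; but this only enlarges $B$ inside the abelian algebra $A\ovt L(\langle a\rangle)$ and does not disturb commutativity, so the conclusion is unaffected.
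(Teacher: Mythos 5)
Your argument is correct for the displayed choice of $H$ (de Cornulier's matrix group), and its first half coincides with the paper's: both invoke relative property~(T) of $H<G$ through Lemma~\ref{lem.char-prop-T} to push an arbitrary central sequence into $B=L(H)'\cap M$, and both use square-summability of Fourier coefficients along infinite $H$-conjugacy classes. You diverge in the endgame: you prove that $B$ itself is abelian, which forces you to identify $\mathrm{FC}_H(G)$ exactly as $\langle a\rangle$; besides relative icc-ness of $H<G$ this needs the FC-radical of $H$ to equal $\langle a\rangle$, and that genuinely uses the explicit shape of $H$ inside $\SL(n+2,\Z)$. The paper never touches the FC-radical of $H$: after landing in $B\subset L^\infty(X)\rtimes H$ it conjugates by $u_t$ (central sequences are asymptotically $\Ad u_t$-invariant) to land also in $L^\infty(X)\rtimes tHt^{-1}$, uses $H\cap tHt^{-1}\subset\cZ(H)$ to get into $L^\infty(X)\rtimes\cZ(H)$, and finishes with a commuting-square argument placing the sequence asymptotically in the abelian algebra $L^\infty(X)^H\rtimes\cZ(H)$. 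The payoff is generality: the theorem concerns the HNN extension \eqref{eq.G-Gamma-not-McDuff} built from \emph{any} property~(T) group $H$ with a central element $a$ of infinite order, and such an $H$ can have non-abelian FC-radical (Ershov's groups, used in Section~3, are property~(T) groups whose FC-radical is not even virtually abelian). For such $H$ your $B$ really is non-abelian and the reduction ``it suffices to show $B$ is abelian'' breaks down, while the conclusion still holds via the paper's route; the $u_t$-conjugation step is precisely what your argument is missing to cover the general case.

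Two smaller points. In the icc argument, ``$t\notin C_G(a^k)$, hence $C_G(a^k)=H$'' is a non sequitur as written, since a priori the centralizer could contain reduced words involving $t$-letters; what you actually need, and what does follow from Britton's lemma together with $|p|\neq|q|$, is that the elements $t^{-j}a^kt^{j}$ are pairwise distinct, so the conjugacy class is infinite. And in your final caveat the ambient abelian algebra should be $A^H\rtimes\langle a\rangle=A^H\ovt L(\langle a\rangle)$ rather than ``$A\ovt L(\langle a\rangle)$'': $u_a$ need not commute with all of $A$, only with $A^H$ --- your main computation already says this correctly. Citing \cite{Ki12a} for property Gamma is legitimate; the paper additionally supplies its own shorter proof via approximately invariant functions for the hyperfinite action of $\Z[p^{-1},q^{-1}]\rtimes\Z$ on $\R$.
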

\begin{proof}
Analyzing reduced words in HNN extensions, it is easy to check that $G$ is icc and that $H < G$ is a relatively icc subgroup, meaning that $\{h g h^{-1} \mid h \in H\}$ is infinite for every $g \in G \setminus H$ (see e.g.\ \cite{St05}).

Ozawa already proved that $L(G)$ has property Gamma and his proof can be found in \cite[Theorem 1.4]{Ki12a}. For completeness, we include the following slightly easier proof. We claim that there exists a sequence of Borel functions $f_n : \R \recht \T$ satisfying $f_n(x+1) = f_n(x)$ for all $x \in \R$,
\begin{equation}\label{eq.claim-fn}
\lim_n \int_0^1 f_n(x) \; dx = 0 \quad\text{and}\quad  \lim_n \int_0^1 |f_n(px) - f_n(qx)|^2 \; dx = 0 \; .
\end{equation}
Denote by $(u_g)_{g \in G}$ the canonical unitary elements of $L(G)$. Given such a sequence of functions $f_n$ and denoting by $\widehat{f_n}(k) = \int_0^1 f_n(x) e^{-2\pi i k x} \; dx$ the Fourier coefficients of $f_n$, we can define the unitary elements $u_n \in L(H)$ given by
$$u_n = \sum_{k \in \Z} \widehat{f_n}(k) \, u_{a}^{pk} \; .$$
Then \eqref{eq.claim-fn} says that $\tau(u_n) \recht 0$ and $\|u_t^* u_n u_t - u_n \|_2 \recht 0$. Since moreover $u_n \in \cZ(L(H))$, it follows that $(u_n)$ is a non trivial central sequence in $L(G)$, so that $L(G)$ has property Gamma.

To prove the existence of $f_n$ satisfying \eqref{eq.claim-fn}, define $\Lambda = \Z[p^{-1},q^{-1}] \rtimes \Z$ as the semidirect product of the abelian group $\Z[p^{-1},q^{-1}]$ with $\Z$ acting by the automorphisms given by multiplication with a power of $p/q$. Define the action $\Lambda \actson^\al \R$ given by $(x,k) \cdot y = x + (p/q)^k y$ and equip $\R$ with the Lebesgue measure class. Since $\Lambda$ is an amenable group, it follows from \cite{CFW81} that the orbit equivalence relation $\cR(\Lambda \actson \R)$ is hyperfinite and thus, not strongly ergodic. This provides us with a sequence of unitary elements $f_n \in L^\infty(\R)$ tending to $0$ weakly and being approximately $\Lambda$-invariant in the sense that $\al_{(x,k)}(f_n) - f_n \recht 0$ strongly for all $(x,k) \in \Lambda$. Restricting $f_n$ to the interval $[0,1)$ and then extending $f_n$ periodically, we find a sequence satisfying \eqref{eq.claim-fn}.

Choose a free ergodic pmp action $G \actson (X,\mu)$. We prove that the II$_1$ factor $M = L^\infty(X) \rtimes G$ is not McDuff. Let $u_n \in \cU(M)$ be a central sequence. Put $B = L(H)' \cap M$. Since $H$ has property~(T), by Lemma \ref{lem.char-prop-T}, we have that $\|u_n - E_B(u_n)\|_2 \recht 0$, where $E_B : M \recht B$ is the unique trace preserving conditional expectation. Since $H < G$ is relatively icc, it follows that $B \subset B_1$, where $B_1 = L^\infty(X) \rtimes H$. We also define $B_2 = L^\infty(X) \rtimes t H t^{-1}$. Since $\|u_n - E_{B_1}(u_n)\|_2 \recht 0$ and $\|u_n - u_t^* u_n u_t\|_2 \recht 0$, we get that $\|u_n - E_{B_2}(u_n)\|_2 \recht 0$. Since $H \cap t H t^{-1} \subset \cZ(H)$, we conclude that $\|u_n - E_D(u_n)\|_2 \recht 0$, where $D = L^\infty(X) \rtimes \cZ(H)$.

Define $C = L^\infty(X)^H \rtimes \cZ(H)$, where $L^\infty(X)^H$ denotes the von Neumann algebra of $H$-invariant functions. We claim that the subalgebras $B, D \subset M$ form a commuting square with $B \cap D = C$. To prove this claim, it suffices to take $a \in B$ and prove that $E_D(a) \in C$. Let $a = \sum_{g \in G} a_g u_g$ with $a_g \in L^\infty(X)$ for all $g \in G$ be the Fourier decomposition of $a$. Since $a$ commutes with $L(H)$, we get in particular that $a_h \in L^\infty(X)^H$ for all $h \in \cZ(H)$. Since $E_D(a) = \sum_{h \in \cZ(H)} a_h u_h$, the claim follows. Since $\|u_n - E_B(u_n)\|_2 \recht 0$ and $\|u_n - E_D(u_n)\|_2 \recht 0$, we conclude that $\|u_n - E_C(u_n)\|_2 \recht 0$. Because $C$ is abelian, $M$ is not McDuff.
\end{proof}

\section{An icc McDuff group that is not stable}

There are two obvious obstructions for stability of a group $G$~: non inner amenability and property~(T). In \cite[Theorem 6.4.2]{PV08} using \cite{Er10}, an example of a McDuff property~(T) group $G$ is constructed. In particular, $G$ is an example of a McDuff group that is not stable. However by \cite[Theorem 6.4.1]{PV08}, a McDuff property~(T) group can never be icc.

In this section, we construct examples of icc McDuff groups that are not stable. We do this by giving the following obstruction for stability and by combining the methods of \cite{PV08} and \cite{Va09}.

\begin{proposition}\label{prop.rel-icc-T-not-McDuff}
If a countable group $G$ admits a subgroup $H < G$ with the relative property~(T) and the relative icc property, then $G$ is not stable.
\end{proposition}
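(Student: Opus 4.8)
The plan is to assume that $G$ is stable and derive a contradiction. So I fix a free ergodic pmp action $G \actson (X,\mu)$ whose orbit relation $\cR = \cR(G \actson X)$ is stable, write $A = L^\infty(X)$ and $M = A \rtimes G = L(\cR)$, and denote by $(u_g)_{g \in G}$ the canonical unitaries. The key input is the Jones--Schmidt characterization of stability \cite{JS85}, which I would use in the following concrete form: stability of $\cR$ provides a sequence of projections $p_n \in A$ with $\tau(p_n) = 1/2$ that is asymptotically central in $M$, together with a sequence of unitaries $u_n \in M$ normalizing $A$ that is asymptotically central and flips $p_n$, in the sense that $\|u_n p_n u_n^* - (1 - p_n)\|_2 \recht 0$. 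These projections and normalizing unitaries are precisely the data assembling the hyperfinite direct factor $\cR_0$; note that it is essential for stability (rather than merely the McDuff property) that the central direction can be realized by a normalizing sequence $u_n$ together with an asymptotically central sequence of projections inside the Cartan subalgebra $A$.

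Next I would feed both sequences into Lemma~\ref{lem.char-prop-T}, using the relative property~(T) of $H$ and the homomorphism $\pi(g) = u_g$, which satisfies $\pi(g) N \pi(g)^* = N$ for $N = A$ and for $N = M$. Applied with $N = A$, where $\pi(H)' \cap A = A^H = L^\infty(X)^H$, the asymptotic centrality of $p_n$ gives $\|p_n - E_{A^H}(p_n)\|_2 \recht 0$; I set $q_n = E_{A^H}(p_n)$. Applied with $N = M$, where $B = L(H)' \cap M$, the asymptotic centrality of $u_n$ gives $\|u_n - E_B(u_n)\|_2 \recht 0$. Here the relative icc property enters exactly as in the proof of Theorem~\ref{thm.Gamma-not-McDuff}: it forces $B \subseteq A \rtimes H$, so that $b_n := E_B(u_n)$ lies in the subalgebra $M_H = A \rtimes H$ and satisfies $\|u_n - b_n\|_2 \recht 0$.

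The heart of the argument is then the observation that $A^H$ is central in $M_H = A \rtimes H$: an $H$-invariant function commutes with $A$ and is fixed under conjugation by every $u_h$, $h \in H$. Since $q_n \in A^H$ and $b_n \in M_H$, we get $b_n q_n b_n^* = q_n b_n b_n^*$; and because $b_n \recht u_n$ in $\|\cdot\|_2$ while both stay in the unit ball, we have $b_n b_n^* \recht 1$ and $q_n \recht p_n$ in $\|\cdot\|_2$. Combining these, $u_n p_n u_n^*$ is asymptotically equal to $b_n q_n b_n^* = q_n b_n b_n^*$, which is asymptotically equal to $q_n$, hence to $p_n$. But stability forced $u_n p_n u_n^*$ to be asymptotically $1 - p_n$, so $\|(1 - p_n) - p_n\|_2 \recht 0$. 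Since $\|1 - 2p_n\|_2 = 1$ for a projection of trace $1/2$, this is the desired contradiction.

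The main obstacle I anticipate is isolating the correct operator-algebraic form of the Jones--Schmidt characterization, namely that stability yields an asymptotically central sequence of Cartan projections $p_n \in A$ together with an asymptotically central \emph{normalizing} sequence $u_n \in N_M(A)$ implementing the flip $p_n \mapsto 1-p_n$. Once this is in place, the two applications of relative property~(T) confine both halves of the hyperfinite building block $\cR_0$ into $A \rtimes H$, where the centrality of $A^H$ makes the flip relation untenable. In particular no further structure of $G$ (such as an HNN decomposition) is used, which is what makes the obstruction apply to an arbitrary group $G$ admitting such a subgroup $H$.
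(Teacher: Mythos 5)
Your proof is correct and follows essentially the same route as the paper's: two applications of Lemma~\ref{lem.char-prop-T} (to the projections $p_n \in A$ and to the normalizing unitaries $u_n$), the relative icc property to force $L(H)' \cap M \subseteq A \rtimes H$, and the centrality of $A^H$ in $A \rtimes H$ to make the two conditional expectations commute. The only difference is presentational: the paper invokes \cite{JS85} in the form ``every asymptotically central pair $(u_n,p_n)$ with $u_n \in \cN_M(A)$ and $p_n$ a projection in $A$ asymptotically commutes implies non-stability'', rather than extracting an explicit trace-$\tfrac12$ flip and deriving a contradiction.
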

\begin{proof}
Let $G \actson (X,\mu)$ be a free ergodic pmp action and denote by $\cR = \cR(G \actson X)$ the associated orbit equivalence relation. Write $A = L^\infty(X)$ and $M = A \rtimes G$. We denote by $\cN_M(A) = \{u \in \cU(M) \mid u A u^* = A\}$ the normalizer of $A$ inside $M$. To prove that $\cR$ is not stable, by \cite[Theorem 3.4]{JS85}, we have to show that all central sequences $(u_n)$ and $(p_n)$ in $M$ with $u_n \in \cN_M(A)$ and $p_n$ a projection in $A$ for all $n$ satisfy $\|u_n p_n - p_n u_n\|_2 \recht 0$.

Put $B = L(H)' \cap M$ and also define $D = A^H$ as the algebra of $H$-invariant functions in $A$. Denote by $E_B : M \recht B$ and $E_D : A \recht D$ the unique trace preserving conditional expectations. Since $H < G$ has the relative property~(T), by Lemma \ref{lem.char-prop-T}, we get that $\|u_n - E_B(u_n)\|_2 \recht 0$ and $\|p_n - E_D(p_n)\|_2 \recht 0$. Since $H < G$ is relatively icc, we have $B \subset A \rtimes H$. Therefore, $B$ and $D$ commute. It follows that $\|u_n p_n - p_n u_n\|_2 \recht 0$.
\end{proof}

For every countable group $H$, we denote by $H_f$ its FC-radical, defined as the normal subgroup of $H$ consisting of all elements that have a finite conjugacy class. Also recall that a group is said to be virtually abelian if it admits an abelian subgroup of finite index. We denote by $C_H(K)$ the centralizer in $H$ of a subset $K \subset H$.

To construct an icc McDuff group that is not stable, we start with the property~(T) group of \cite[Theorem 6.4]{PV08}~: a residually finite property~(T) group $H$ such that the FC-radical $H_f$ is not virtually abelian. In \cite{Er10}, it was proved that such a group indeed exists. From \cite[Theorem 6.4]{PV08}, we know that $H$ is McDuff. We then perform the same iterative amalgamated free product construction as in \cite{Va09} in order to embed $H$ as a relatively icc subgroup of a larger group $G$ in such a way that $G$ remains McDuff.

Fix a decreasing sequence of finite index normal subgroups $L_n \lhd H$ with $\bigcap_n L_n = \{e\}$. Enumerate $H_f = \{h_n \mid n \in \N\}$. Note that for every $n \in \N$, $C_H(h_1,\ldots,h_n)$ is a finite index subgroup of $H$. Then,
$$H_n := L_n \cap \left( \bigcap_{g \in H} g \, C_H(h_1,\ldots,h_n) \, g^{-1} \right)$$
is a decreasing sequence of finite index normal subgroups of $H$ with $\bigcap_n H_n = \{e\}$. Put $K_n = H_f \cap H_n$.

Define inductively $H = G_0 \subset G_1 \subset G_2 \subset \cdots$ as the amalgamated free product $G_{n+1} = G_n *_{K_n} (K_n \times \Z)$, where we view $K_n \subset H = G_0 \subset G_n$. Define $G$ as the direct limit of $G_0 \subset G_1 \subset \cdots$.

\begin{theorem}\label{thm.McDuff-not-stable}
The group $G$ constructed above is icc, has the McDuff property, but is not stable.
\end{theorem}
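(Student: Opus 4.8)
The plan is to establish the three assertions separately, and I expect the McDuff property to require the most care. The group $G$ is constructed as an increasing union of amalgamated free products $G_{n+1} = G_n *_{K_n} (K_n \times \Z)$, so the natural strategy is to leverage the structure of these building blocks together with the two key properties that $H$ was chosen to have: the property~(T) group $H$ is McDuff, and its FC-radical $H_f$ is not virtually abelian.

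First I would prove that $G$ is icc and that $H < G$ is relatively icc. The point of the construction is exactly to kill the FC-radical: the subgroups $H_n$ were chosen so that $K_n = H_f \cap H_n$ are central-enough inside $H$ (they sit in $\bigcap_{g} g\, C_H(h_1,\dots,h_n)\, g^{-1}$, so they commute with the first $n$ enumerated elements of $H_f$), and adjoining the copies of $\Z$ amalgamated over $K_n$ progressively enlarges conjugacy classes. I would analyze reduced words in each amalgamated free product $G_{n+1} = G_n *_{K_n} (K_n \times \Z)$ to show that for $g \in G \setminus H$ the set $\{hgh^{-1} \mid h \in H\}$ is infinite, and that no nontrivial element of $G$ has a finite conjugacy class in $G$; since $H_f$ is not virtually abelian and the $H_n$ shrink to $\{e\}$, the amalgamation destroys finiteness of all the relevant conjugacy classes. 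Once $H < G$ is relatively icc and $H$ has property~(T), Proposition \ref{prop.rel-icc-T-not-McDuff} immediately yields that $G$ is not stable.

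The McDuff property is the substantive step, and here I would argue by transfer of a central sequence from $H$ to $G$. Since $H$ is McDuff, there exists a free ergodic pmp action $H \actson (Y,\nu)$ with $L^\infty(Y) \rtimes H$ McDuff, hence a noncommuting central sequence witnessing the failure of the central sequence algebra to be abelian. The goal is to induce from this an action $G \actson (X,\mu)$ whose crossed product is still McDuff, and the mechanism is that each amalgamation step $G_n *_{K_n}(K_n \times \Z)$ introduces commuting hyperfinite pieces over the amalgam $K_n$ while the noncommutativity of the central sequence algebra, supported on $H_f$, is preserved because $K_n$ captures the relevant part of $H_f$. Concretely I would build the action by a coinduction/amalgamation procedure over the direct system, arrange that the central sequences coming from the McDuff witness of $L^\infty(Y)\rtimes H$ remain central in each $L^\infty(X)\rtimes G_n$ and pass to the limit $M = L^\infty(X)\rtimes G$, and check that their non-abelian behavior survives. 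This is where the condition that $H_f$ is not virtually abelian does the work: it guarantees that the asymptotically central, non-commuting unitaries can be chosen inside the image of $L(H_f)$ in the central sequence algebra, which the amalgamation over $K_n$ does not collapse.

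The main obstacle I anticipate is verifying that the central sequence remains both \emph{central} and \emph{non-commuting} after the full direct limit has been taken, rather than merely at each finite stage. Centrality must be checked against the newly adjoined generators $t_n$ of the $\Z$ factors, which requires that the McDuff witness be (approximately) fixed by conjugation by $t_n$; this forces the witnessing unitaries to live in $L(K_n)$-related subalgebras, and one must confirm compatibility across all $n$ simultaneously as $K_n \searrow$. Controlling the interplay between the shrinking amalgams $K_n$ and the fixed non-virtually-abelian FC-radical $H_f$ — ensuring that enough noncommutativity of $L(H_f)''\cap M^\om$ is retained in the limit while centrality is gained — is the delicate quantitative heart of the argument, and it is precisely the combination of the Popa--Vaes construction of \cite{PV08} with the iterated amalgamation technique of \cite{Va09} that is engineered to make this balance work.
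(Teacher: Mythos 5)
Your overall architecture matches the paper's: relative icc of $H<G$ plus property~(T) of $H$ feeds into Proposition \ref{prop.rel-icc-T-not-McDuff} to rule out stability, icc comes from a word analysis in the amalgamated free products, and the McDuff property is the substantive point. (For the record, the paper's icc argument is short: $K_n$ has infinite index in $H$ because $H_f$ is infinite amenable inside a property~(T) group, which gives relative icc for $g\in G_{n+1}\setminus G_n$; and for $h\in H\setminus K_n$ the conjugates $t^k h t^{-k}$ by the stable letter of $G_{n+1}$ are pairwise distinct.) The ``not stable'' part of your proposal is complete.

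The McDuff part, however, has a genuine gap at exactly the point you flag as the ``delicate quantitative heart''. Two things are missing. First, the action: the paper does not coinduce. It uses the retractions $\pi_n:G\to G_n$ (killing the stable letters) to pull back the profinite action $H\actson \varprojlim H/H_n$ and the Bernoulli actions of the quotients $G_n/K_{n-1}$, and takes the diagonal product; freeness holds only for the product, not for any single factor. Second, and more importantly, the central sequences are not obtained by transferring an abstract McDuff witness of $L^\infty(Y)\rtimes H$~--- an arbitrary such witness has no reason to commute with the newly adjoined generators or with the new coordinates of $X$, and the paper never invokes the statement ``$H$ is McDuff'' inside the proof. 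Instead one writes down, for each $h\in H_f$ and $k$ with $H_k\subset C_H(h)$, the explicit unitary $v_h=\sum_{s\in H/H_k}\theta(1_{sH_k})\,u_{sh^{-1}s^{-1}}$, which the profinite structure forces to commute with $L(H)$, which has $\tau(v_h)=0$, and which for $h\in K_n$ commutes with $L(G_{n+1})$ because $g\pi_0(g)^{-1}$ centralizes $K_n$, as well as with the relevant coordinates of $L^\infty(X)$. Choosing non-commuting $h_n,h'_n\in K_n$ (possible since $K_n$ has finite index in the non-virtually-abelian $H_f$, hence is non-abelian) gives two central sequences with $\tau(v_{h_n}v_{h'_n}v_{h_n}^*v_{h'_n}^*)=0$. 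Your proposal correctly predicts that the witnesses must live over $H_f$ and be compatible with the shrinking $K_n$, but without this explicit construction the argument does not close; ``combine the construction of \cite{PV08} with the technique of \cite{Va09}'' is a restatement of the task, not its solution.
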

\begin{proof}
We first prove that $H < G$ is relatively icc. Take $g \in G \setminus H$. Take $n \geq 0$ such that $g \in G_{n+1} \setminus G_n$. We have $G_{n+1} = G_n *_{K_n} (K_n \times \Z)$ and $K_n < H < G_n$. Since $H_f$ is infinite amenable and $H$ has property~(T), $H_f < H$ has infinite index. So $K_n$ has infinite index in $H$ and it follows that $\{h g h^{-1} \mid h \in H\}$ is an infinite set. To prove that $G$ is icc, it suffices to prove that every $h \in H$ with $h \neq e$ has an infinite conjugacy class. Take $n \geq 0$ such that $h \in H \setminus K_n$. Let $t$ be the generator of $\Z$ in the description $G_{n+1} = G_n *_{K_n} (K_n \times \Z)$. Since $h \in G_n \setminus K_n$, the elements $t^k h t^{-k}$, $k \in \Z$, are all distinct.

Since $H$ has property~(T), it follows from Proposition \ref{prop.rel-icc-T-not-McDuff} that $G$ is not stable. It remains to construct a free ergodic pmp action $G \actson (X,\mu)$ such that $L^\infty(X) \rtimes G$ is McDuff.

For every $n \geq 0$, we have $G_{n+1} = G_n *_{K_n} (K_n \times \Z)$ and we define the homomorphism $\pi_{n,n+1} : G_{n+1} \recht G_n$ given by the identity map on $G_n$ and by mapping $\Z$ to $\{e\}$. Writing $\pi_{n,m} = \pi_{n,n+1} \circ \cdots \circ \pi_{m-1,m}$ for all $n \leq m$, we obtain a compatible system of homomorphisms $\pi_{n,m} : G_m \recht G_n$ that combine into a homomorphism $\pi_n : G \recht G_n$. In particular, $\pi_0 : G \recht H$. By induction on $n$, one checks that $K_m$ is a normal subgroup of $G_{n+1}$ for all $n \geq 0$ and all $m \geq n$, and that $g \pi_0(g)^{-1}$ commutes with $K_n$ for every $g \in G_{n+1}$.

Denote by $H \actson^{\be} (Y,\nu)$ the profinite action given as the inverse limit of $H \actson H/H_n$. Define $G \actson^{\al} Y$ by $\al(g) = \be(\pi_0(g))$. Note that $\al$ is an ergodic pmp action. For every $n \geq 1$, we consider the Bernoulli action $\beta_n$ of $G_{n}/K_{n-1}$ on $X_n = [0,1]^{G_{n}/K_{n-1}}$ equipped with the product measure $\mu_n = \lambda^{G_n/K_{n-1}}$. We define $G \actson^{\al_n} X_n$ by $\al_n(g) = \be_n(\pi_n(g))$. Note that $\al_n$ is a weakly mixing pmp action. We define $(X,\mu)$ as the product of $(Y,\nu)$ and all $(X_n,\mu_n)$, $n \geq 1$. We define $G \actson X$ as the diagonal action. Since each $\al_n$ is weakly mixing and $\al$ is ergodic, the diagonal action $G \actson (X,\mu)$ is ergodic. When $g \in G \setminus \{e\}$, take $n \geq 1$ such that $g \in G_n \setminus K_{n-1}$. Since the Bernoulli action $\beta_n$ is essentially free, the set $\{x \in X_n \mid \al_n(g)(x) = x \}$ has measure zero and thus also $\{x \in X \mid g\cdot x = x\}$ has measure zero. We conclude that $G \actson X$ is essentially free.

Denote $M = L^\infty(X) \rtimes G$ and let $\tau : M \recht \C$ be the tracial state on $M$. To prove that $M$ is McDuff, we use the method of \cite[Theorem 6.4]{PV08}. Denote by $\theta : L^\infty(Y) \recht L^\infty(X)$ and $\theta_n : L^\infty(X_n) \recht L^\infty(X)$ the natural $*$-homomorphisms induced by the coordinate maps $X \recht Y$ and $X \recht X_n$. The definition of $H_k$ implies that for every $h \in H_f$, we have $H_k \subset C_H(h)$ for all $k$ large enough. So for every $h \in H_f$, we can define the element
$$v_h = \sum_{s \in H/H_k} \theta(1_{sH_k}) \, u_{s h^{-1} s^{-1}}$$
whenever $k$ is such that $H_k \subset C_H(h)$. A direct computation shows that $\tau(v_h)=0$ for every $h \in H_f \setminus \{e\}$ and that $h \mapsto v_h$ is a homomorphism from $H_f$ to $\cU(L(H)' \cap M)$.

When $g \in G_{n+1}$, we have that $g \pi_0(g)^{-1}$ commutes with $K_n$. It follows that $v_h$ commutes with $L(G_{n+1})$ whenever $h \in K_n$. When $h \in K_n$, we also get that $v_h$ commutes with $\theta(1_{r H_m})$ for all $r \in H$ and all $m \leq n$ and that $v_h$ commutes with $\theta_m(L^\infty(X_m))$ for all $m \leq n+1$. So, choosing any sequence $h_n \in K_n \setminus \{e\}$, we get that $(v_{h_n})_n$ is a central sequence of unitaries in $M$ with $\tau(v_{h_n}) = 0$ for all $n$. Since $K_n$ is a finite index subgroup of $H_f$, we know that $K_n$ is non abelian. We can thus choose sequences $h_n, h'_n \in K_n$ such that $h_n h'_n \neq h'_n h_n$ for all $n$. The corresponding central sequences $(v_{h_n})$ and $(v_{h'_n})$ satisfy $\tau(v_{h_n} v_{h'_n} v_{h_n}^* v_{h'_n}^*) = 0$ for all $n \geq 0$. So, $M$ is a McDuff II$_1$ factor.
\end{proof}

\section{All McDuff groups are inner amenable}

For icc groups $G$, the following result is a consequence of the main theorem in \cite{Ch81}. In the non icc case, we need a small extra argument, because we follow the convention that a group $G$ is inner amenable if there exists a conjugation invariant mean on $G$ that gives weight zero to all finite subsets of $G$ and not only to the subset $\{e\}$.

\begin{proposition}\label{prop.McDuff-inner}
Every McDuff group $G$ is inner amenable.
\end{proposition}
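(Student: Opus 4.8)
The plan is to convert the non-abelianity of the central sequence algebra into a diffuse conjugation-invariant mean on $G$. Fix a free ergodic pmp action $G \actson (X,\mu)$ realizing the McDuff property, write $A = L^\infty(X)$, $M = A \rtimes G$ with canonical unitaries $(u_g)_{g\in G}$ and trace-preserving conditional expectation $E_A : M \recht A$, and let $\si_g = \Ad u_g$ denote the action on $A$. I use the following (easy) sufficient criterion for inner amenability in the strong sense used in the paper: if there is a sequence of unit vectors $\zeta_n \in \ell^2(G)$ with $\|\pi^c(g)\zeta_n - \zeta_n\|_2 \recht 0$ for all $g \in G$ and $\zeta_n(h) \recht 0$ for every fixed $h$, where $\pi^c$ is the conjugation representation $(\pi^c(g)\zeta)(h) = \zeta(g^{-1}hg)$, then a weak-$*$ cluster point of the states $f \mapsto \sum_h f(h)|\zeta_n(h)|^2$ on $\ell^\infty(G)$ is a conjugation-invariant mean vanishing on every singleton, hence on every finite set. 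So it suffices to produce such $\zeta_n$.

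First I would extract the right central sequence. Since $M' \cap M^\om$ is non-abelian while $A^\om$ is abelian, we have $M' \cap M^\om \not\subseteq A^\om$; as $G$ is countable, a diagonal argument lets us represent a witnessing element by an honest bounded sequence $(x_n)$ with $\sup_n\|x_n\|_\infty < \infty$, with $\|[x_n,y]\|_2 \recht 0$ for all $y \in M$, and with $c := \lim_n \|x_n - E_A(x_n)\|_2 > 0$. Setting $\xi_n(g) = \|E_A(x_n u_g^*)\|_2$ defines $\xi_n \in \ell^2(G)$ with $\|\xi_n\|_2 = \|x_n\|_2$. Expanding $u_g x_n u_g^* = \sum_k \si_g\big(E_A(x_n u_{g^{-1}kg}^*)\big)\, u_k$, using $\|[x_n,u_g]\|_2 \recht 0$, and using that $\si_g$ is isometric on $L^2(A)$, one gets $\|\pi^c(g)\xi_n - \xi_n\|_2 \recht 0$ for every $g \in G$.

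The key technical step uses freeness to stop $\xi_n$ from concentrating at any non-trivial element. From $\|[x_n,a]\|_2 \recht 0$ and commutativity of $A$ one obtains $\sum_g \|E_A(x_n u_g^*)(\si_g(a)-a)\|_2^2 \recht 0$, so for each fixed $g$ and every $a \in A$ that $\|E_A(x_n u_g^*)(\si_g(a)-a)\|_2 \recht 0$. For $g \neq e$, freeness means $\si_g$ has no fixed points modulo null sets, so I would choose countably many contractions $a_j \in A$ with $\bigcap_j \{\si_g(a_j) = a_j\}$ null; then $\varphi_g := \sum_j 2^{-j}|\si_g(a_j)-a_j|^2$ is strictly positive a.e., and since $\int |E_A(x_n u_g^*)|^2 \varphi_g \, d\mu \recht 0$ while $|E_A(x_n u_g^*)| \le \sup_n\|x_n\|_\infty$, a Chebyshev estimate on $\{\varphi_g \ge \delta\}$ forces $\xi_n(g) \recht 0$ for every $g \neq e$. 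Finally, because $\{e\}$ is a full conjugacy class, $\pi^c(g)$ fixes $\delta_e$, so passing to $\eta_n := \xi_n\, 1_{G\setminus\{e\}}$ preserves the approximate conjugation-invariance while $\|\eta_n\|_2^2 = \|x_n - E_A(x_n)\|_2^2 \recht c^2 > 0$. Normalizing $\zeta_n = \eta_n/\|\eta_n\|_2$ then yields unit vectors with $\zeta_n(h) \recht 0$ for all $h$ and $\|\pi^c(g)\zeta_n - \zeta_n\|_2 \recht 0$, which by the criterion above proves inner amenability.

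The main obstacle is the middle step and its interaction with the McDuff hypothesis. The whole role of McDuff is to guarantee $c > 0$, i.e.\ that a definite amount of $\ell^2$-mass escapes off the identity; the role of freeness is precisely to prevent this escaping mass from re-concentrating on any single non-trivial element (and hence, after removing the harmless atom at $e$, on any finite conjugacy class). This is also where the non-icc case genuinely differs from the icc case covered by \cite{Ch81}: for icc groups conjugation-invariance alone forces a mean to vanish on every singleton, whereas here the argument must actively rule out concentration on the finite conjugacy classes, which is exactly what the freeness estimate achieves.
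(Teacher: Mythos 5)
Your proof is correct and follows essentially the same route as the paper's: both rest on Choda's observation that the vectors $\xi_n(g) = \|E_A(x_n u_g^*)\|_2$ are approximately conjugation invariant, together with essential freeness of the action forcing $\xi_n(g) \recht 0$ for every fixed $g \neq e$, so that the McDuff hypothesis leaves a definite amount of $\ell^2$-mass escaping to infinity off the identity. The only cosmetic differences are that the paper argues contrapositively (every central sequence is asymptotically in $A$ when $G$ is not inner amenable) and implements the freeness step via a maximal projection $p_1$ with $p_1\al_g(p_1)=0$ rather than your separating family of functions combined with a Chebyshev estimate.
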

\begin{proof}
Let $G$ be a group that is not inner amenable. Let $G \actson^\al (X,\mu)$ be any free ergodic pmp action. Denote $A = L^\infty(X)$ and $M = A \rtimes G$. Let $x_n \in \cU(M)$ be an arbitrary central sequence. We prove that $\lim_n \|x_n - E_A(x_n)\|_2 = 0$.

Write $x_n = \sum_{g \in G} a_n(g) \, u_g$ with $a_n(g) \in A$ for every $g \in G$. Fix $g \in G \setminus \{e\}$. Since $\lim_n \|p \, x_n - x_n \, p\|_2 = 0$ for all $p \in A$, we have $\lim_n \|p \, a_n(g) - a_n(g) \, \al_g(p) \|_2 = 0$ for all $p \in A$. Let $p_1 \in A$ be a maximal projection satisfying $p_1 \al_g(p_1) = 0$. Put $p_2 = \al_g(p_1)$ and $p_3 = 1-(p_1+p_2)$. Since the action $\al$ is essentially free, we get that $\sum_i p_i = 1$ and $p_i \, \al_g(p_i) = 0$ for all $i$. It follows that $\lim_n \|p_i \, a_n(g)\|_2 = 0$ for every $i$, so that $\lim_n \|a_n(g)\|_2 = 0$ for all $g \in G \setminus \{e\}$.

Define the unit vectors $\xi_n \in \ell^2(G)$ given by $\xi_n(g) = \|a_n(g)\|_2$. In the previous paragraph, we proved that $\lim_n \xi_n(g) = 0$ for every $g \in G \setminus \{e\}$. As in \cite{Ch81}, the vectors $\xi_n$ are approximately conjugation invariant. Since $G$ is not inner amenable, the functions $\xi_n$ must satisfy $\lim_n \xi_n(e) = 1$, meaning that $\lim_n \|x_n - E_A(x_n)\|_2 = 0$.

So, we have proved that every central sequence $x_n \in \cU(M)$ satisfies $\lim_n \|x_n - E_A(x_n)\|_2 = 0$. It follows that $M' \cap M^\omega$ is abelian, so that $M$ is not McDuff.

\end{proof}

\end{document}